\begin{document}
\theoremstyle{plain}
\newtheorem{thm}{Theorem}[section]
\newtheorem{prop}[thm]{Proposition}
\newtheorem{lem}[thm]{Lemma}
\newtheorem{clry}[thm]{Corollary}
\newtheorem{deft}[thm]{Definition}
\newtheorem{hyp}{Assumption}
\newtheorem{conjecture}[thm]{Conjecture}
\newtheorem*{KashiwaraThm}{Watermelon Theorem}

\theoremstyle{definition}
\newtheorem{rem}[thm]{Remark}
\numberwithin{equation}{section}
\newcommand{\eps}{\varepsilon}
\renewcommand{\phi}{\varphi}
\renewcommand{\d}{\partial}
\newcommand{\re}{\mathop{\rm Re} }
\newcommand{\im}{\mathop{\rm Im}}
\newcommand{\R}{\mathbf{R}}
\newcommand{\C}{\mathbf{C}}
\newcommand{\N}{\mathbf{N}} 
\newcommand{\D}{C^{\infty}_0}
\newcommand{\supp}{\mathop{\rm supp}}
\newcommand{\grad}{\mathop{\rm grad}\nolimits}
\renewcommand{\div}{\mathop{\rm div}}
\newcommand{\vol}{\mathop{\rm vol}\nolimits}
\newcommand{\WF}{\mathop{{\rm WF}_{a}}}
\title[]{On the linearized local Calder\'on problem}
\author[]{David~Dos~Santos~Ferreira, Carlos~E.~Kenig, Johannes~Sj\"ostrand, Gunther~Uhlmann}
\address{Universit\'e Paris 13 \\ Cnrs, umr 7539 LAGA \\ 99, avenue Jean-Baptiste Cl\'ement \\ F-93430 Villetaneuse \\ France}
\address{Department of Mathematics \\ University of Chicago \\ 5734 University Avenue \\ Chicago, IL 60637-1514 \\ USA}
\address{Universit\'e de Bourgogne \\ Cnrs, umr 5584 IMB \\ 9, Avenue Alain Savary, BP 47870 \\ F-21078 Dijon \\ France}
\address{Department of Mathematics \\ University of Washington \\ Seattle, WA 98195 \\ USA}
\begin{abstract}
   In this article, we investigate a density problem coming from the linearization of Calder\'on's problem with partial data. 
   More precisely, we prove that the set of products of harmonic functions on a bounded smooth domain
   $\Omega$ vanishing on any fixed closed proper subset of the boundary are dense in $L^{1}(\Omega)$ in  all dimensions
   $n \geq 2$. This is proved using ideas coming from the proof of Kashiwara's Watermelon theorem \cite{K}.
\end{abstract}
\maketitle
\setcounter{tocdepth}{1} 
\tableofcontents
%
%
\begin{section}{Introduction}
\begin{subsection}{Main results}
In the seminal article \cite{C}, A.~P.~Calder\'on asked the question of whether it is possible to determine the electrical conductivity
of a body by making current and voltage measurements at the boundary. Put in mathematical terms, the question amounts to whether the knowledge 
of the Dirichlet-to-Neumann map associated to the conductivity equation 
\begin{align}
\label{Intro:ConductEq}
     \mathop{\rm div} (\gamma \nabla u) = 0 
\end{align}
on a bounded open set  $\Omega$ with smooth boundary uniquely determines a bounded from below conductivity $\gamma \in L^{\infty}(\Omega)$. 
Using Green's formula, the problem can be reformulated in the following way: does the cancellation 
\begin{align*}
    \int\limits_{\Omega} (\gamma_{1}-\gamma_{2}) \nabla u_{1} \cdot \nabla u_{2} \, dx = 0
\end{align*}
for all solutions $u_{1},u_{2}$ in $H^1(\Omega)$ of equation \eqref{Intro:ConductEq} with respective conductivities $\gamma_{1},\gamma_{2}$  
imply that $\gamma_{1}$ and $\gamma_{2}$ are equal? Since 1980, the problem has been extensively 
studied and answers have been given in many cases (see for instance \cite{KV,SU,N,AP}). In his article \cite{C}, Calder\'on studied the linearization
of this problem at constant conductivities $\gamma=\gamma_{0}$: does the cancellation 
\begin{align*}
    \int\limits_{\Omega} \gamma \nabla u \cdot \nabla v \, dx = 0
\end{align*}
for all pairs of \textit{harmonic} functions $(u,v)$ imply that $\gamma \in L^{\infty}(\Omega)$ vanishes identically? The answer can easily be seen 
to be true by using harmonic exponentials. A similar and related inverse problem for the Schr\"odinger equation
\begin{align}
\label{Intro:SchrodEq}
     -\Delta u + q u =0 
\end{align}     
on a bounded open set with smooth boundary $\Omega$ is whether the Dirichlet-to-Neumann map associated to this equation uniquely determines the bounded potential $q$
(see for instance  \cite{SU,N,B}).  In \cite{SU}, Calder\'on's problem is reduced to this problem for $\gamma \in C^2$.
The linearization of this inverse problem at $q=0$ leads to the question of density of products of harmonic functions in $L^{1}(\Omega)$.  
Again the use of harmonic exponentials is enough to conclude this.

We are interested in local versions of these inverse problems, in particular to prove that if $\Lambda_{q_{j}}$ denotes the Dirichlet-to-Neumann map associated 
with the Schr\"odinger equation \eqref{Intro:SchrodEq} with potential $q_{j}$ and if
\begin{align}
\label{Intro:LocalPb}
    \Lambda_{q_{1}}f|_{\Sigma}=\Lambda_{q_{2}}f|_{\Sigma}, \quad \forall f \in H^{\frac{1}{2}}(\d \Omega), \quad \supp f \subset \Sigma,
\end{align}
where $\Sigma$ is an open neigbourhood of some point in the boundary, then $q_{1}=q_{2}$. An equivalent formulation is that the cancellation
\begin{align*}
     \int\limits_{\Omega} q u_{1} u_{2} \, dx = 0
\end{align*} 
for all solutions $u_{1},u_{2}$ in $H^1(\Omega)$ of the Schr\"odinger equations \eqref{Intro:SchrodEq} with bounded potentials $q_{1},q_{2}$, whose restrictions to the boundary 
are supported in $\Sigma$, imply that $q$ vanishes identically. This result has recently been proved in dimension $n=2$ by Imanuvilov,  Uhlmann,
and Yamamoto in \cite{IUY2}.  The case of partial data where one drops the support constraint on the test functions $f \in H^{\frac{1}{2}}(\d \Omega)$
was treated in various situations by Bukhgeim and Uhlmann \cite{B}, Kenig, Sj\"ostrand and Uhlmann  \cite{KSU}, Isakov \cite{I} in dimension $n \geq 3$ and 
Imanuvilov, Uhlmann, and Yamamoto \cite{IUY} in dimension $2$.
However the question of global identifiability from \eqref{Intro:LocalPb} is still open in dimension $n \geq 3$.

As a first step in this study, we consider here the linearized version of the local problem: we add the constraint that the restriction of the harmonic functions 
to the boundary vanishes on any fixed closed proper subset of the boundary. 
\begin{thm}
\label{Intro:LinThm}
    Let $\Omega$ be a connected bounded open set in $\R^{n}$, $n \geq 2$, with smooth boundary. The set of products of 
   harmonic functions in $C^{\infty}(\overline{\Omega})$ which vanish on a closed proper subset $\Gamma \subsetneq \d\Omega$ of the boundary is dense in $L^1(\Omega)$. 
\end{thm}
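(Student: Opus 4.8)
My plan is to dualize. Since it suffices to show that the linear span of the products is dense, I would take $f\in L^\infty(\Omega)$ with $\int_\Omega f\,u_1u_2\,dx=0$ for every pair of harmonic $u_1,u_2\in C^\infty(\overline\Omega)$ vanishing on $\Gamma$, and prove $f\equiv0$. Extend $f$ by zero to a compactly supported element of $L^\infty(\R^n)$; taking real and imaginary parts we may assume $f$ is real, so that $\WF(f)$ is conic-symmetric. It is also harmless to enlarge $\Gamma$ --- if $\Gamma_1\subset\Gamma_2$, the harmonic functions vanishing on $\Gamma_2$ form a subclass of those vanishing on $\Gamma_1$ --- so we may assume $\Gamma=\d\Omega\setminus\Sigma_0$ with $\Sigma_0$ an arbitrarily small relatively open cap, placed wherever is convenient on $\d\Omega$. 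The goal becomes $\supp f=\emptyset$.

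I would locate $\Sigma_0$ around a point $p_0\in\d\Omega$ of maximal distance from some fixed interior point $x_0$. Then $\overline\Omega$ lies in the ball $B(x_0,R)$, $R=|p_0-x_0|$, tangent to $\d\Omega$ at $p_0$, so in coordinates centered at $p_0$ with $x_n$ the inner normal one has $\supp f\subset\overline\Omega\subset\{x_n\ge c|x'|^2\}$ near $p_0$ for some $c>0$: a ``watermelon'' resting on the tangent hyperplane $\{x_n=0\}$ and meeting it only at $p_0$. Moreover, since $\Gamma$ is compact and avoids a neighbourhood of $p_0$, a Cauchy--Schwarz argument gives the strict separation $\sup_{x\in\Gamma}(x-p_0)\cdot\nu<0$ for the outer normal $\nu=\nu(p_0)$, hence also $\sup_{x\in\Gamma}(x-p_0)\cdot\omega<0$ for $\omega$ in a small open cone $\mathcal C\ni\nu$.

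The technical core is to build, from null exponentials, harmonic functions vanishing on $\Gamma$ that probe $f$ microlocally at $p_0$. For $\zeta\in\C^n$ with $\zeta\cdot\zeta=0$ the function $e^{x\cdot\zeta}$ is harmonic; I correct it to $u_\zeta=e^{x\cdot\zeta}+r_\zeta$, with $r_\zeta$ harmonic in $\Omega$, $r_\zeta=-e^{x\cdot\zeta}$ on $\Gamma$ and $r_\zeta=0$ on $\Sigma_0$ (smoothing the transition near $\d\Sigma_0$ so that $u_\zeta\in C^\infty(\overline\Omega)$), so that by the maximum principle $\|r_\zeta\|_{L^\infty(\Omega)}\lesssim e^{H_\Gamma(\re\zeta)}$, where $H_\Gamma$ is the support function of $\Gamma$. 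Inserting a pair $u_{\zeta_1},u_{\zeta_2}$ into the orthogonality relation gives $\int_\Omega f\,e^{x\cdot(\zeta_1+\zeta_2)}\,dx=-\int_\Omega f\,(e^{x\cdot\zeta_1}r_{\zeta_2}+r_{\zeta_1}e^{x\cdot\zeta_2}+r_{\zeta_1}r_{\zeta_2})\,dx$, whose right-hand side is $O(e^{H_{\overline\Omega}(\re\zeta_1)+H_\Gamma(\re\zeta_2)}+e^{H_\Gamma(\re\zeta_1)+H_{\overline\Omega}(\re\zeta_2)})$. Taking the real parts of the $\zeta_j$ large in directions inside the separating cone $\mathcal C$ (and steering the imaginary parts to sweep out the allowed frequencies), the affine weight $x\mapsto e^{x\cdot\re\zeta_j}$ restricted to the watermelon $\supp f$ concentrates at $p_0$ like a Gaussian packet, while the separation makes the error exponents strictly smaller than the main one; a stationary-phase / FBI-transform reading then yields exponential decay of an FBI transform of $f$ at $p_0$ in a cone of directions, i.e. microlocal analyticity $(p_0,\xi)\notin\WF(f)$ for $\xi$ in a suitable open cone. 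Feeding this into Kashiwara's Watermelon theorem --- which, precisely because $\supp f$ is genuinely watermelon-shaped near $p_0$, upgrades such limited microlocal information to the conclusion $p_0\notin\supp f$ --- and then repeating the construction at points near $p_0$, one gets that $f$ vanishes in a neighbourhood of $\Sigma_0$ in $\overline\Omega$.

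It remains to propagate ``$f=0$ near $\Sigma_0$'' to ``$f=0$ in $\Omega$''. I would peel off the region where $f$ is already known to vanish: on the smaller domain, whose free boundary piece is now an interior hypersurface, one re-establishes the orthogonality relation by a Runge / Lax--Malgrange approximation of harmonic functions vanishing on $\Gamma$, picks a strictly convex point on the interior hypersurface together with its own separating cone, and reruns the microlocal argument and the Watermelon theorem; connectedness of $\Omega$ guarantees this peeling exhausts $\Omega$, so $f\equiv0$. The whole scheme is insensitive to the dimension, which is why it covers $n=2$ equally. I expect the main obstacle to be the microlocal estimate of the third paragraph --- converting the crude $L^\infty$ bound on the correction $r_\zeta$ into honest exponential decay of an FBI transform of $f$, with a choice of the null vectors $\zeta_j$ finely adapted to the cone $\mathcal C$ and to the watermelon geometry --- together with the careful verification of the hypotheses of the Watermelon theorem; the Runge-type reduction in the propagation step is the other point requiring care.
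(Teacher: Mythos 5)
Your plan matches the paper's in broad architecture: dualize, normalize so that $\supp f$ lies on one side of a tangent hyperplane at a point $p_0$ of the accessible part of the boundary, build harmonic exponentials corrected to vanish on $\Gamma$ with a support-function bound on the remainder, read off decay of the Segal--Bargmann transform, and propagate globally by a Runge-type approximation (the last step is essentially the paper's Lemma~2.3). The gap sits in the microlocal middle. The orthogonality relation only gives you exponential smallness of $\int_\Omega f(y)\,e^{-\frac{i}{h}y\cdot w}\,dy$ for $w=\zeta_1+\zeta_2$ a sum of null vectors close to the pair $(a\gamma,-a\bar\gamma)$, i.e.\ for $w$ in a small ball around $2ia\nu$ ($\nu$ the outer normal, $\nu=e_1$ in the paper's normalization). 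Fed into the superposition formula $Tf(z)=(2\pi h)^{-n/2}\iint e^{-(z^{2}+t^{2})/2h}e^{-iy\cdot(t+iz)/h}f\,dt\,dy$, this improves $|Tf(z)|$ only where $\re z$ lies far along $\nu$ and $\im z$ is small. That is not a neighbourhood of any point $p_0-i\xi_0$ with $\xi_0\neq 0$, hence not an analytic wave front statement at $p_0$; no ``fine adaptation'' of the null vectors $\zeta_j$ repairs this, because sums of admissible null vectors simply never land in the region $\re z\approx p_0$, $\im z\neq 0$ that microlocal analyticity at $p_0$ requires.

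The missing bridge --- from FBI decay at $\re z_1$ large down to decay at $\re z_1\approx 0$ --- is exactly what the paper's Lemma~4.1 supplies, via a one-complex-variable maximum-principle propagation (subharmonic $2h\log|F|-(\im s)^2+(\re s)^2$, a harmonic majorant on a slit semi-disc, the Hopf boundary lemma, Harnack). That argument is an adaptation of the \emph{proof} of Kashiwara's theorem, not an application of its statement; you cannot both skip the propagation and invoke the theorem as a black box, since the hypothesis you would feed it (microlocal analyticity at $(p_0,\xi)$ for $\xi$ in a cone) is precisely what is unavailable without the propagation. Two related points: the strict convexity of $\supp f$ at $p_0$ is not the operative mechanism --- after the Kelvin transform (your farthest-point normalization plays the same role) the paper uses only the half-space containment $\supp f\subset\{x_1\le 0\}$, and the watermelon shape serves to guarantee that containment, not to turn the exponential weight into a Gaussian packet; and once the propagation lemma is in hand the conclusion is reached directly by letting $h\to 0$ in $(2\pi h)^{-n/2}Tf(x)\to f(x)$ at real $x$ near $p_0$, with no detour through $\WF$ or the microlocal Holmgren theorem.
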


Another motivation for considering this linearized problem is the following possible application of Theorem  \ref{Intro:LinThm} to travel time tomography 
in dimension $2$. We conjecture that one can use Theorem \ref{Intro:LinThm}
and a method developed by Pestov and Uhlmann in \cite{PU2} to solve the corresponding global problem
to show that in a \textit{simple} 2-dimensional Riemannian manifold with boundary,  
the conformal factor of the metric is uniquely determined from partial knowledge of the boundary distance function.
A Riemannian manifold with boundary $(X,g)$ is said to be \textit{simple} if its boundary is strictly convex and if for all $x  \in \d X$,
the exponential map $\exp_x:U_{x} \to X$ is a diffeomorphism from a neighbourhood $U_{x}$ of $0$ in $T_xX$ to $X$. 
\begin{conjecture}
    Let $(X,g_{1})$ and $(X,g_{2})$ be two simple compact Riemannian manifolds of dimension $2$ with boundary, 
    and $d_{1}$ and $d_{2}$ denote their respective Riemannian distances. Let $Y$ be a non-empty open subset 
    of the boundary $\d X$ and suppose that $g_1$ and $g_2$ are conformal metrics. If 
          $$ d_{1}|_{Y \times \d X} =  d_{2}|_{Y \times \d X} $$
    then $g_{1}=g_{2}$.
\end{conjecture}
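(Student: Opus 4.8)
The plan is to work with the conformal factor and to reduce the conjecture in two stages: first to an injectivity statement for the geodesic X-ray transform on a simple surface when only geodesics hitting $Y$ are used, and then to deduce that injectivity from Theorem~\ref{Intro:LinThm}. A simple surface is diffeomorphic to a closed disc, hence simply connected, so we may fix global isothermal coordinates and identify $(X,g_{1})$ with a bounded smooth domain $\Omega\subset\R^{2}$ carrying $g_{1}=c^{2}(dx^{2}+dy^{2})$, $c\in C^{\infty}(\overline\Omega)$ positive; writing $g_{2}=e^{2\rho}g_{1}$ with $\rho\in C^{\infty}(\overline X)$ the goal is $\rho\equiv 0$, and we set $\Gamma=\d X\setminus Y$, a closed proper subset of $\d X$. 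For the first stage, consider the conformal path $g_{t}=e^{2t\rho}g_{1}$, $t\in[0,1]$. Its $g_{t}$-geodesic curvature along $\d X$ equals $e^{-t\rho}$ times an affine function of $t$ which is positive at $t=0$ and $t=1$ because $g_{1}$ and $g_{2}$ are simple, hence positive throughout; granting that no conjugate points develop along the path (a point to be checked), every $g_{t}$ is simple, and the first variation of arc length together with the criticality of $g_{t}$-geodesics for the $g_{t}$-length gives, for the minimizing $g_{t}$-geodesic $\gamma_{t}$ from $x$ to $y$,
\[
   \frac{d}{dt}\,d_{g_{t}}(x,y)=\int_{\gamma_{t}}\rho\,ds_{g_{t}}=I_{g_{t}}\rho(\gamma_{t}),
\]
the geodesic X-ray transform of $\rho$ for $g_{t}$. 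Thus $d_{1}=d_{2}$ on $Y\times\d X$ amounts to $\int_{0}^{1}I_{g_{t}}\rho(\gamma_{t})\,dt=0$ for all $(x,y)\in Y\times\d X$, and a continuity argument in $t$, of the type used to pass from nonlinear boundary rigidity to its linearization, should reduce the conjecture to the following linear statement: \emph{on a simple surface $(X,g)$ with $Y\subset\d X$ open and nonempty, if $f\in C^{\infty}(\overline X)$ has vanishing geodesic X-ray transform over every maximal geodesic with at least one endpoint in $Y$, then $f\equiv 0$.}

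For this local injectivity I would follow the two-dimensional machinery of Pestov and Uhlmann \cite{PU2}: solve the transport equation $Xw=-f$ on the unit circle bundle $SX$, where $X$ now denotes the generator of the geodesic flow, with appropriate vanishing conditions on the part of $\d(SX)$ lying over $Y$, and exploit the decomposition of the geodesic vector field into fibrewise holomorphic and antiholomorphic parts. The aim is an identity of Calder\'on type,
\[
   \int_{X}f\,u_{1}u_{2}\,dV_{g}=\mathcal B(f;u_{1},u_{2}),
\]
valid for all $g$-harmonic $u_{1},u_{2}\in C^{\infty}(\overline X)$ (i.e.\ $\Delta_{g}u_{j}=0$) vanishing on $\Gamma$, in which $\mathcal B$ is a boundary functional depending only on the values of $I_{g}f$ along geodesics meeting $Y$ — the analogue of $\int_{\Omega}q\,u_{1}u_{2}\,dx=\langle(\Lambda_{q_{1}}-\Lambda_{q_{2}})u_{1}|_{\Sigma},u_{2}\rangle$ from the Calder\'on problem, with the holomorphic integrating factor available on a simple surface playing the role of the complex geometric optics amplitudes. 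Under the hypothesis just stated $\mathcal B$ vanishes, so $\int_{X}f\,u_{1}u_{2}\,dV_{g}=0$ for all such $u_{1},u_{2}$. In the isothermal coordinates $\Delta_{g}=c^{-2}\Delta$, so $g$-harmonic functions are exactly the Euclidean harmonic ones on $\Omega$, $dV_{g}=c^{2}\,dx\,dy$, and $\Gamma$ corresponds to a closed proper subset of $\d\Omega$; hence $\int_{\Omega}(c^{2}f)\,u_{1}u_{2}\,dx\,dy=0$ for all harmonic $u_{1},u_{2}\in C^{\infty}(\overline\Omega)$ vanishing on $\Gamma$, and Theorem~\ref{Intro:LinThm} forces $c^{2}f\equiv 0$, i.e.\ $f\equiv 0$. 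Carried back through the first stage, this gives $\rho\equiv 0$, hence $g_{1}=g_{2}$.

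The main obstacle is the construction in the second stage of the identity with a boundary term $\mathcal B$ genuinely localized to geodesics touching $Y$: this amounts to producing sufficiently many fibrewise holomorphic solutions of the transport problem on $SX$ whose pushforwards to $X$ realise arbitrary products of harmonic functions vanishing on $\Gamma$ while keeping control of all boundary contributions, and it is the geometric-optics counterpart of the delicate constructions in the partial-data Calder\'on problem; note that the available microlocal results on the local geodesic X-ray transform do not apply for an arbitrary open set $Y$, which is exactly why a two-dimensional, Calder\'on-type input such as Theorem~\ref{Intro:LinThm} is needed. A second, more technical point, needed to complete the first stage, is to verify that the conformal family $g_{t}$ is free of conjugate points and that the continuity argument in $t$ really reduces the nonlinear statement to its linearization; in the conformal case the inequalities $\int_{\gamma}(e^{\rho}-1)\,ds_{g_{1}}\ge 0$ over $g_{1}$-geodesics and $\int_{\sigma}(e^{-\rho}-1)\,ds_{g_{2}}\ge 0$ over $g_{2}$-geodesics (both with endpoints in $Y\times\d X$), immediate from $d_{1}=d_{2}$, are likely the right tool for this reduction.
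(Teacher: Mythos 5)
There is nothing in the paper to compare your argument against: this statement is left as an open \emph{conjecture}. The authors only indicate the strategy they have in mind --- combine Theorem~\ref{Intro:LinThm} with the Pestov--Uhlmann method of \cite{PU2} relating the boundary distance/scattering data to the Dirichlet-to-Neumann map --- and explicitly defer the matter to future work. Your proposal follows exactly that announced strategy, but it is not a proof; both of its stages stop at the points where the real difficulty lies, and you say so yourself. In the first stage, the passage from $d_{1}|_{Y\times\d X}=d_{2}|_{Y\times\d X}$ to the linear statement (vanishing of $I_{g}\rho$ over all maximal geodesics with an endpoint in $Y$) is not established: you would need every $g_{t}=e^{2t\rho}g_{1}$ to be simple (absence of conjugate points along the path is asserted, not proved), and even granting that, the identity you derive only gives $\int_{0}^{1}I_{g_{t}}\rho(\gamma_{t})\,dt=0$, which is an average over the family and does not yield the vanishing of any single transform without a genuinely new argument. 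The classical way to handle the conformal case with full data (Mukhometov, Croke) uses the inequalities $\int_{\gamma}(e^{\rho}-1)\,ds_{g_{1}}\ge 0$, $\int_{\sigma}(e^{-\rho}-1)\,ds_{g_{2}}\ge 0$ together with Santal\'o's formula, i.e.\ an integration over \emph{all} geodesics; with data only on $Y\times\d X$ that integral-geometric step is unavailable, and you do not supply a substitute.

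The second stage has the same character: the identity $\int_{X}f\,u_{1}u_{2}\,dV_{g}=\mathcal B(f;u_{1},u_{2})$, with $\mathcal B$ depending only on $I_{g}f$ along geodesics meeting $Y$ and with $u_{1},u_{2}$ ranging over harmonic functions vanishing on $\Gamma$, is precisely the partial-data analogue of the Pestov--Uhlmann relation that would have to be constructed; you correctly identify it as ``the main obstacle'' and do not construct it. The full-data relation in \cite{PU2} between the boundary distance function and $\Lambda_{g}$ does not obviously localize to an arbitrary open $Y$, and producing the fibrewise holomorphic solutions of the transport equation with the required boundary control is the whole content of the conjecture beyond Theorem~\ref{Intro:LinThm}. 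The final step --- isothermal coordinates, $\Delta_{g}=c^{-2}\Delta$, and the application of Theorem~\ref{Intro:LinThm} to conclude $c^{2}f\equiv 0$ --- is fine, and it is indeed the role the authors intend their density theorem to play; but as it stands your text is a research outline consistent with the paper's stated program, not a proof of the conjecture.
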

We hope to come back to this possible application in future work.
\end{subsection}

\begin{subsection}{The Watermelon approach}
The Segal-Bargmann transform of an $L^{\infty}$ function $f$ on $\R^{n}$ is given by the following formula
\begin{align*}
    Tf(z) = \int\limits_{\R^{n}} e^{-\frac{1}{2h}(z-y)^{2}} f(y) \, dy 
\end{align*}
with $z=x+i\xi \in \C^{n}$. The extension of this definition to tempered distributions is straightforward.
The Segal-Bargmann transform is related to the microlocal analysis of analytic singularities of a distribution:
the analytic wave front set $\WF(f)$ of $f$ is the complement of the set of all covectors $(x_{0},\xi_{0}) \in T^{*}\R^{n} \setminus 0$ such that there exists
a neighbourhood $V_{z_{0}}$ of $z_{0}=x_{0}-i\xi_{0}$ in $\C^{n}$,  a cutoff function $\chi \in \D(\R^{n})$ with $\chi(x_{0})=1$, and two constants 
$c>0$ and $C>0$ for which one has the estimate
\begin{align}
\label{Intro:WFdef}
   |T(\chi f)(z)| \leq  C e^{-\frac{c}{h}+\frac{1}{2h}|\im z|^{2}}, \quad \forall z \in V_{z_{0}}, \quad \forall h \in (0,1].
\end{align}
The analytic wave front set $\WF(f)$ is a closed conic set and its image by the first projection $T^{*}\R^{n} \to \R^{n}$ is the analytic singular 
support of $f$, i.e. the set of points $x_{0} \in \R^{n}$ for which there is no neighbourhood on which $f$ is a real analytic function.

When a distribution $f$ is supported on a half space $H$ and when $x_{0} \in \supp f \cap \d H$ then $f$ cannot be analytic at $x_{0}$, 
so the analytic wave front set of $f$ cannot be empty. The following result (see \cite{H}) gives explicitly covectors which are in the wave front set.
\begin{thm}
\label{Water:MicroHolmgren}
     Let $f$ be a distribution supported in a half-space $H$,  if $x_{0} \in \d H$ belongs to the support of $f$, then 
     $(x_{0},\pm \nu)$ belongs to the analytic wave front set of $f$ where $\nu$ denotes a unit conormal to the hyperplane $\d H$. 
\end{thm}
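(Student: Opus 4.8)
The plan is to argue by contradiction, using the characterization of $\WF$ by the Segal-Bargmann transform recalled above. First I would normalize: assume $x_0=0$, $\partial H=\{y_n=0\}$, $H=\{y_n\le 0\}$ and $\nu=e_n$, and reduce the case of the covector $-\nu$ to that of $\nu$ by applying the statement to $\bar f$, which has the same support. Multiplying $f$ by a real cutoff $\chi\in\D(\R^n)$ equal to $1$ near $0$ and supported in a small ball changes neither $\WF$ over $0$ nor the facts that $0\in\supp f$ and $\supp f\subset\{y_n\le 0\}$, so I may assume $f$ is a compactly supported distribution of finite order with these properties. Now suppose, for contradiction, that $(0,e_n)\notin\WF(f)$: then there are $c,C>0$ and a complex neighbourhood $V$ of $z_0=-ie_n$ with $|Tf(z)|\le C e^{-c/h+|\im z|^2/(2h)}$ for $z\in V$ and $h\in(0,1]$.

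The core of the argument uses the plurisubharmonic function $\Phi_h(z)=h\log|Tf(z)|-\tfrac12|\im z|^2$; since $Tf$ is entire, $\Phi_h+\tfrac14|z|^2$ is plurisubharmonic. Two a priori bounds are available. Estimating $Tf(z)=\langle f,e^{-(z-\cdot)^2/(2h)}\rangle$ crudely, using only that $f$ has finite order and compact support, gives the support bound
\begin{align*}
  \Phi_h(z)\le-\tfrac12\,\mathrm{dist}(\re z,\supp f)^2+O(h\log\tfrac1h)
\end{align*}
locally uniformly; in particular $\Phi_h(z)\le O(h\log\tfrac1h)$ for $z$ near $0$, and, because $\supp f\subset\{y_n\le 0\}$, even $\Phi_h(z)\le-\tfrac12(\re z_n)^2+O(h\log\tfrac1h)$ when $\re z_n\ge 0$ (with analogous bounds for all slightly tilted half-spaces still containing $\supp f$). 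The hypothesis gives the microlocal bound $\Phi_h(z)\le-c+O(h\log\tfrac1h)$ on $V$. From these three facts and plurisubharmonicity I want to deduce that $\Phi_h(z)\le-c'$ on a full complex neighbourhood of $0$, for some $c'>0$ and all small $h$; granting this, the inversion formula for $T$ shows $f$ is real analytic near $0$, and since $f$ vanishes on the nonempty open set $\{y_n>0\}$ near $0$, analytic continuation forces $f\equiv 0$ near $0$, contradicting $0\in\supp f$. Thus $(0,e_n)\in\WF(f)$, and by the reduction $(0,-e_n)\in\WF(f)$ as well.

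The main obstacle — and the reason Kashiwara's Watermelon theorem is invoked — is the propagation step in the previous paragraph. A naive maximum principle on a fixed domain is useless, because the regions where $\Phi_h$ is genuinely negative (the half-space $\{\re z_n>0\}$ and the small ball $V$ around $-ie_n$) do not surround the origin, while elsewhere the support bound gives only $\Phi_h\le O(h\log\tfrac1h)$, leaving no margin. Following the scheme of \cite{H} and \cite{K}, I would instead propagate along the segment $\{-ite_n:0\le t\le1\}$: starting at $t=1$ from $V$, a two-constants / Phragm\'en--Lindel\"of estimate on a carefully chosen complex domain — exploiting the whole family of tilted support bounds — transports the negativity of $\Phi_h$ to a neighbourhood of $-it'e_n$ for $t'$ slightly below $t$, with quantitative control of the loss; a connectedness (or finite iteration) argument then brings the estimate down to $t=0$, i.e.\ to a neighbourhood of the origin. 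The delicate points are controlling the $O(1)$ discrepancy between $\Phi_h$ and $\tfrac12|\im z|^2$ throughout the iteration, and choosing the comparison domains and barriers so that each step advances by a fixed amount and the process reaches the origin before the gain $c'$ is used up — exactly the mechanism the paper announces it will adapt in the proof of Theorem~\ref{Intro:LinThm}.
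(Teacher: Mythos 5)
Your approach is genuinely different from the paper's. The paper does not prove this theorem from scratch: the statement is given with a reference to H\"ormander \cite{H}, and the paper's own argument is the short deduction from Kashiwara's Watermelon theorem appearing immediately afterwards (since $f$ is not analytic at $x_0$, some $(x_0,\xi_0)\in\WF(f)$; propagate to $(x_0,\xi_0+t\nu)$ by the Watermelon theorem; rescale by conicity; pass to the limit by closedness). You instead run the Segal--Bargmann/Watermelon machinery directly, bypassing Kashiwara's theorem. The overall scheme --- cut off, argue by contradiction, combine the support bound for $\re z_n\ge 0$ with the microlocal decay near $-ie_n$, and propagate using subharmonicity --- is sound and is indeed the mechanism behind \cite{H,Sj}.

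Two points need repair. First, the intermediate goal ``$\Phi_h\le -c'$ on a full complex neighbourhood of $0$'' is both more than the sketched propagation can give and more than you need. Propagating along the single segment $\{-it e_n:0\le t\le 1\}$ can at best yield decay near that segment; it does not touch $z=-ie_1$, say, so it does not establish analyticity of $f$ at $0$. But it does not have to: since $(2\pi h)^{-n/2}Tf(x)\to f(x)$ as $h\to 0$, the bound $\Phi_h\le -c'$ at \emph{real} $z$ near $0$ already forces $f\equiv 0$ there and gives the contradiction, with no inversion formula or analytic continuation --- this is exactly how the paper concludes at the end of Section~4. Second, the propagation step, which you rightly flag as the heart of the matter, is left as a sketch of an iterative Phragm\'en--Lindel\"of scheme, and this is where the proposal falls short of a proof. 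In fact no iteration is needed here: fix $z'$ real near $0$, work in $s=z_n$, and observe that $g(s)=h\log|Tf(s,z')|-\tfrac12(\im s)^2$ is subharmonic in $s$, with $g\le O(h\log\tfrac1h)$ everywhere, $g\le -\tfrac12(\re s)^2+O(h\log\tfrac1h)$ for $\re s\ge 0$, and $g\le -c+O(h\log\tfrac1h)$ on a disc about $-i$. A single harmonic-measure estimate on a rectangle (or semi-disc) with that disc removed then gives $g(0)\le -\omega c+O(h\log\tfrac1h)$ for some fixed $\omega>0$; this is precisely the one-variable Lemma the paper proves in Section~4, with the region of improved decay moved from a real point $L$ to the imaginary point $-i$. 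Until that estimate is actually written out, the proposal is an outline of the right idea rather than a proof.
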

One sometimes refers to Theorem \ref{Water:MicroHolmgren} as the microlocal version of Holmgren's uniqueness theorem. This is due to the fact that the combination of
this result together with microlocal ellipticity 
     $$ \WF(u) \subset \WF(Pu) \cup \mathop{\rm char} P $$
in the conormal direction (equivalent to the fact that the hypersurface is non-characteristic) yields Holmgren's uniqueness theorem (see \cite{Sj}
chapter 8, \cite{H} chapter VIII and \cite{H2}). Other applications involve the proof of Helgason's support theorem on the Radon transform and extensions 
(see \cite{BQ} and \cite{H2}) of this result. Theorem \ref{Water:MicroHolmgren} has also proved to be a useful tool in the resolution of inverse
problems (see \cite{KSU} and \cite{DSFKSU}) with partial data. In fact the microlocal version of Holmgren's uniqueness theorem is a consequence%
\footnote{There are of course other ways to prove Theorem \ref{Water:MicroHolmgren}.}
of a more general result on the analytic wave front set due to Kashiwara (see \cite{K,Sj,H})
\begin{KashiwaraThm}
    Let $f$ be a distribution supported in a half-space $H$, if $x_{0} \in \d H$ and if $(x_{0},\xi_{0})$ belongs to the analytic wave front set of $f$, then 
    so does $(x_{0},\xi_{0}+t\nu)$ where $\nu$ denotes a unit conormal to the hyperplane $\d H$ provided $\xi_{0}+t\nu \neq 0$. 
\end{KashiwaraThm}
From Kashiwara's Watermelon theorem, it is easy to deduce the microlocal version of Holmgren's uniqueness theorem: if $f$ is supported in the half-space $H$ 
and $x_{0} \in \d H \cap \supp f$ then there exists $(x_{0},\xi_{0})$ in the analytic wave front set of $f$ since $f$ cannot be analytic at $x_{0}$, then
$(x_{0},\xi_{0}+t\nu) \in \WF(f)$ by the Watermelon theorem, which implies $(x_{0},\nu+\xi_{0}/t) \in \WF(f)$ since the wave front set is conic 
and finally $(x_{0},\nu) \in \WF(f)$ by passing to the limit since the wave front set is closed.

One possible proof of Kashiwara's Watermelon theorem involves the Segal-Bargmann transform. Note that there is an \textit{a priori} exponential bound on the Segal-Bargmann transform of an $L^{\infty}$ function
\begin{align*}
    |Tf(z)| \leq (2\pi h)^{\frac{n}{2}}e^{\frac{1}{2h}|\im z|^{2}}\|f\|_{L^{\infty}}.
\end{align*}
If $f$ is supported in the half-space $x_{1}\leq 0$ then the former estimate can be improved into
\begin{align*}
    |Tf(z)| \leq   (2\pi h)^{\frac{n}{2}}e^{\frac{1}{2h}(|\im z|^{2}-|\re z_{1}|^{2})} \|f\|_{L^{\infty}}
\end{align*}
when $\re z_1 \geq 0$. The exponent in the right-hand side is harmonic with respect to $z_{1}$.
The idea of the proof of the Watermelon theorem is to propagate the exponential decay by use of the maximum principle. If $f$ is supported in the half-space $x_{1}\leq 0$, one works with the subharmonic function
   $$ \phi(z_1)+\frac{1}{2}(\re z_1)^2-\frac{1}{2}(\im z_1)^2+h\log|Tf(z_0+z_1e_1)| $$
on a rectangle $R$. One of the edges of $R$ is contained in the neighbourhood $V_{z_0}$ where there is the additional exponential decay 
\eqref{Intro:WFdef} of the Segal-Bargmann transform and one chooses $\phi$ to be a non-negative harmonic function vanishing on the boundary of $R$
except for the segment where there is the exponential decay. The fact that $\phi$ is positive on the interior of the rectangle $R$ allows 
to propagate the exponential decay of the Segal-Bargmann transform and this translates into the propagation of singularities described in the Watermelon theorem. 
For more details we refer the reader to \cite{Sj, Sj2}. In this note, we will use a variant of this argument adapted to our problem. 
\end{subsection}
\end{section}

\subsection*{Acknowledgements}

D. DSF.~is parly supported by ANR grant Equa-disp.
C.E.K.~is partly supported by NSF grant DMS-0456583. 
G.U. would like to acknowledge partial support of NSF and a Walker Family Endowed Professorship.

%
%
\begin{section}{From local to global results}
Let $\Omega$ be a connected bounded open set in $\R^{n}$ with smooth boundary. Consider a proper closed subset $\Gamma \subsetneq \d\Omega$ of 
the boundary and a function $f \in L^{\infty}(\Omega)$. Our aim is to prove that the cancellation 
\begin{align}
\label{Local:IntCancel}
    \int\limits_{\Omega} f u v \, dx =0
\end{align}
for any pair of harmonic functions $u$ and $v$ in $C^{\infty}(\overline{\Omega})$ satisfying
     $$ u|_{\Gamma} = v|_{\Gamma} = 0  $$
implies that $f$ vanishes identically. Note that the bigger the subset $\Gamma$ is, the smaller the set of harmonic functions vanishing 
on $\Gamma$ is. Therefore we can assume that the complement of $\Gamma$ in the boundary, is a small open neighbourhood of some point of the boundary.
We will obtain Theorem \ref{Intro:LinThm} as a corollary of a local result.
\begin{thm}
\label{Local:LocLinThm}
   Let $\Omega$ be a bounded open set in $\R^{n}$, $n \geq 2$, with smooth boundary, let $x_0 \in \d\Omega$ and $\Gamma$ be the complement of an open
   boundary neighbourhood  of $x_0$. There exists $\delta>0$  such that if we have the cancellation \eqref{Local:IntCancel} 
   for any pair of harmonic functions $u$ and $v$ in $C^{\infty}(\overline{\Omega})$ vanishing on $\Gamma$, then  $f$ vanishes on $B(x_0,\delta) \cap \Omega$.
\end{thm}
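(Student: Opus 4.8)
The plan is to reduce the statement to the real-analyticity of $f$ near $x_0$, and then to establish that analyticity by a Segal--Bargmann argument in the spirit of the Watermelon theorem. After a rigid motion assume $x_0=0$, with $\d\Omega$ near $0$ tangent to $\{y_n=0\}$ (inner normal $e_n$) and $\Sigma:=\d\Omega\setminus\Gamma$ a small boundary neighbourhood of $0$; extend $f$ by zero to $\R^n$, so $\supp f\subset\overline{\Omega}$. It suffices to prove that $f$ is real-analytic in some ball $B(0,\delta)$: then $f$, real-analytic on the connected ball $B(0,\delta)$ and vanishing on the nonempty open set $B(0,\delta)\setminus\overline{\Omega}$, vanishes on all of $B(0,\delta)$, hence on $B(0,\delta)\cap\Omega$. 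By the description of the analytic singular support recalled in the introduction this amounts to showing $(0,\xi_0)\notin\WF(f)$ for every $\xi_0\in\R^n\setminus 0$, i.e.\ the estimate \eqref{Intro:WFdef} near $z_0=-i\xi_0$; it is in fact enough to bound $Tf$ itself, since for $\chi\equiv1$ near $0$ and $\re z$ near $0$ the Gaussian localizes $y$ near $0$ and $T((1-\chi)f)$ is already $O(e^{-c/h+|\im z|^2/(2h)})$. Given the a priori bound $|Tf(z)|\le Ch^{n/2}e^{|\im z|^2/(2h)}$, the whole content is the extra factor $e^{-c/h}$ for $z$ near $z_0$.

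Two mechanisms feed this decay. First, $\supp f\subset\overline{\Omega}$ gives $|Tf(z)|\le\|f\|_{L^1}e^{(|\im z|^2-\mathrm{dist}(\re z,\overline{\Omega})^2)/(2h)}$; in the model coordinates, for $z=z_0-z_1e_n$ with $\re z_1>0$ small, $\re z$ lies just outside $\overline{\Omega}$ at distance $\asymp\re z_1$, so this is a genuine gain on $\{\re z_1>0\}$. It cannot suffice by itself — it fails for $f=\mathbf 1_\Omega$, which is not analytic at $x_0$ — so the cancellation hypothesis must supply decay in the complementary region, and I would extract it by pairing $f$ with products of harmonic functions vanishing on $\Gamma$. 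From a harmonic exponential $e^{\zeta\cdot y}=(e^{(\zeta/2)\cdot y})^2$ with $\zeta\cdot\zeta=0$ (and, more flexibly, from complex geometric optics solutions concentrating near $0$) one manufactures such a product: with $\rho\in C^\infty(\d\Omega)$ equal to $1$ near $\Gamma$ and $0$ near $0$ and $R_\zeta$ the harmonic extension of $\rho\,(e^{(\zeta/2)\cdot y}|_{\d\Omega})$, the function $u_\zeta:=e^{(\zeta/2)\cdot y}-R_\zeta$ is harmonic with $u_\zeta|_\Gamma=0$, so $\int_\Omega f u_\zeta^2\,dy=0$ and hence
\[
   \int_\Omega f(y)\,e^{\zeta\cdot y}\,dy=2\int_\Omega f(y)\,e^{(\zeta/2)\cdot y}R_\zeta(y)\,dy-\int_\Omega f(y)\,R_\zeta(y)^2\,dy .
\]
Since $Tf(z)$ is, up to an elementary prefactor, a Gaussian average over $s\in\R^n$ of the Fourier--Laplace transform $\zeta\mapsto\int_\Omega f\,e^{\zeta\cdot y}\,dy$ evaluated at $\zeta=z/h+is$, feeding this identity (and its analogue for mixed products) into that representation annihilates the leading contribution to $Tf(z)$, leaving only terms built from the corrections $R_\zeta$. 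To make those negligible one runs the $\zeta$'s along directions adapted to a concentrating harmonic weight $\varphi$ on $\overline{\Omega}$ with a strict maximum at $0$ — e.g.\ $\varphi$ built from the fundamental solution $|y-y^*|^{2-n}$ (or $-\log|y-y^*|$ when $n=2$), $y^*$ just outside $\Omega$ on the inner normal at $0$, so that $\varphi$ attains its maximum over $\overline{\Omega}$ uniquely at $0$. Since $\rho$ is supported where $\varphi$ is bounded away from that maximum, the $R_\zeta$ are exponentially subdominant, with a fixed gain $e^{-\delta_0/h}$, and one obtains a second improved bound for $Tf(z)$ valid in a region adjacent to $z_0$.

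With the two improved bounds in hand I would conclude by the announced variant of Kashiwara's Watermelon argument: on a rectangle $R$ in the $z_1$-plane with $0\in\d R$, apply the maximum principle to the subharmonic function $z_1\mapsto h\log|Tf(z_0-z_1e_n)|-\frac{1}{2h}|\im(z_0-z_1e_n)|^2+\eta(z_1)$, with $\eta\ge 0$ harmonic, vanishing on the part of $\d R$ where only the a priori bound holds and positive in the interior; on the remaining edges the two decay inputs beat the quadratic term while leaving $\eta$ positive at $z_1=0$, which forces $|Tf(z_0)|\le Ce^{-c/h+|\im z_0|^2/(2h)}$, and similarly for $z$ near $z_0$. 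The step I expect to be the real obstacle is the middle one: converting the bilinear cancellation into an honest pointwise decay estimate for $Tf$ — i.e.\ synthesizing enough of the Segal--Bargmann kernel on $\overline{\Omega}$ out of products of harmonic functions that already vanish on $\Gamma$, with quantitative control of the corrections $R_\zeta$ (and of the contour deformations implicit in the transform) — and then arranging the geometry of $R$ so that this residual decay, together with the exterior decay, actually reaches the boundary frequency $z_1=0$, which (as $f=\mathbf 1_\Omega$ shows) the support condition alone cannot reach. Finally, $n=2$ calls for a separate but parallel treatment, null vectors in $\C^2$ being too rigid for this synthesis; there one works with products of holomorphic and anti-holomorphic functions vanishing on $\Gamma$.
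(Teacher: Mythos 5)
Your plan is essentially the paper's plan: use harmonic exponentials $e^{-\frac{i}{h}x\cdot\zeta}$ with $\zeta\in p^{-1}(0)$, corrected by a Dirichlet extension so that they vanish on $\Gamma$, feed the resulting Fourier--Laplace decay of $\int f\,e^{-\frac{i}{h}x\cdot(\zeta+\eta)}\,dx$ for $\zeta+\eta$ near a distinguished complex direction into the Segal--Bargmann transform, and propagate the decay together with the support-induced decay by a maximum-principle argument in the $z_1$-plane (Watermelon). You also correctly identify that the hard part is making the correction terms uniformly subdominant.

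There are, however, a few concrete points where your sketch deviates from what actually works or contains errors. First, you reduce by a rigid motion only and then rely on the global bound $|Tf(z)|\le\|f\|_{L^1}e^{(|\im z|^2-\mathrm{dist}(\re z,\overline{\Omega})^2)/2h}$; for a general curved $\Omega$ this does not give clean half-space-type decay, because far from $x_0$ the domain may protrude into $\{y_n<0\}$. The paper fixes this with a Kelvin (inversion) transform so that, after the change of variables, $\Omega\setminus\{x_0\}$ lies strictly on one side of the tangent plane; then the linear weight $x_1$ concentrates at $x_0$ automatically, and the \emph{ad hoc} concentrating harmonic weight $\varphi$ you propose (a fundamental solution with pole just outside $\Omega$) becomes unnecessary. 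This conformal flattening is arguably the single most useful technical device in the proof and your sketch is missing it. Second, writing $e^{\zeta\cdot y}=(e^{(\zeta/2)\cdot y})^2$ and using $u_\zeta^2$ only produces decay of $\int f\,e^{\zeta\cdot y}$ for null $\zeta$, which is a codimension-one set of frequencies and not enough; you need \emph{two independent families} $\zeta,\eta\in p^{-1}(0)$ with $\zeta+\eta$ surjecting onto a full open subset of $\C^n$ (the paper's computation of $Ds(\zeta_0,\eta_0)$ near $(\gamma,-\overline\gamma)$). You gesture at this with ``(and its analogue for mixed products)'' but it is the crux, not an aside. Third, and this is an error rather than a gap: your closing remark that $n=2$ requires a separate treatment because ``null vectors in $\C^2$ are too rigid'' is incorrect. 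In $\C^2$ the null cone is $\C\gamma\cup\C\overline\gamma$ with $\gamma=(i,1)$, and $(\gamma,-\overline\gamma)$ is a basis of $\C^2$, so $(\zeta,\eta)\mapsto\zeta+\eta$ restricted to $\C\gamma\times\C\overline\gamma$ is already onto; this is exactly the holomorphic-times-antiholomorphic product you mention, and the paper handles $n=2$ in the same unified framework (cf.\ the explicit decomposition \eqref{Harm:decomp2D}). Finally, the middle step you flag as an obstacle --- converting the bilinear cancellation into pointwise decay of $Tf$ --- is handled in the paper not by ``synthesizing the Segal--Bargmann kernel'' in any sophisticated sense, but simply by expanding the Gaussian kernel as a superposition of exponentials (formula \eqref{Water:Superp}), splitting the $t$-integral, and feeding the Fourier--Laplace decay into the $|t|\le\eps a$ part; the tail is controlled by $e^{-t^2/2h}$. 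That is much more elementary than what your sketch anticipates.
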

Let us see how this local result implies the global one. We have learned of this technique from unpublished work of Alessandrini, Isozaki and Uhlmann
(personal communication). We will need the following approximation lemma in the spirit of the Runge approximation theorem.
\begin{lem}
\label{Local:RungeApprox}
    Let $\Omega_1 \subset \Omega_2$ be two bounded open sets with smooth boundaries. Let $G_{\Omega_2}$ be the Green kernel associated 
    to the open set $\Omega_{2}$
        $$ -\Delta_yG_{\Omega_{2}}(x,y)=\delta(x-y), \quad G_{\Omega_{2}}(x,\cdot)|_{\d \Omega_2}=0. $$
    Then the set
    \begin{align}
    \label{Local:DenseSet}
          \bigg\{ \int\limits_{\Omega_{2}} G_{\Omega_2}(\cdot,y) a(y) \, dy : \, a \in C^{\infty}(\overline{\Omega}_2), \, 
          \supp a \subset \overline{\Omega}_{2} \setminus \Omega_{1} \bigg\}
    \end{align}
    is dense for the $L^2(\Omega_{1})$ topology in the subspace of harmonic functions $u \in C^{\infty}(\overline{\Omega}_{1})$ such that
    $u|_{\d\Omega_{1}\cap \d\Omega_{2}}=0$.
\end{lem}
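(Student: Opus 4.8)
The plan is to use a duality argument in the spirit of the Runge approximation theorem. First, by the elementary properties of the Green kernel, for $a\in C^\infty(\overline\Omega_2)$ with $\supp a\subset\overline\Omega_2\setminus\Omega_1$ the function
\[
   v_a:=\int_{\Omega_2}G_{\Omega_2}(\cdot,y)\,a(y)\,dy
\]
is the solution of $-\Delta v_a=a$ in $\Omega_2$ with $v_a|_{\d\Omega_2}=0$; since $\d\Omega_2$ is smooth and $a\in C^\infty(\overline\Omega_2)$, elliptic regularity gives $v_a\in C^\infty(\overline\Omega_2)$, and since $a\equiv 0$ on $\Omega_1$ the function $v_a$ is harmonic there, while $v_a|_{\d\Omega_1\cap\d\Omega_2}=0$ because $\d\Omega_1\cap\d\Omega_2\subset\d\Omega_2$. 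Hence the set $\eqref{Local:DenseSet}$ is a linear subspace of the space of harmonic $u\in C^\infty(\overline\Omega_1)$ vanishing on $\d\Omega_1\cap\d\Omega_2$, and by the Hahn--Banach theorem it is enough to show that if $g\in L^2(\Omega_1)$ satisfies $\int_{\Omega_1}g\,v_a\,dx=0$ for all such $a$, then $\int_{\Omega_1}g\,u\,dx=0$ for every harmonic $u\in C^\infty(\overline\Omega_1)$ with $u|_{\d\Omega_1\cap\d\Omega_2}=0$.

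So fix such a $g$, extend it by zero to $\tilde g\in L^2(\Omega_2)$, and let $w\in H_0^1(\Omega_2)\cap H^2(\Omega_2)$ solve $-\Delta w=\tilde g$ in $\Omega_2$, that is $w(y)=\int_{\Omega_2}G_{\Omega_2}(x,y)\,\tilde g(x)\,dx$; thus $w|_{\d\Omega_2}=0$, and $w$ is harmonic on $\Omega_2\setminus\overline\Omega_1$ since $\tilde g$ vanishes there. By Fubini's theorem,
\begin{align*}
   \int_{\Omega_1}g\,v_a\,dx=\int_{\Omega_2}\tilde g(x)\int_{\Omega_2}G_{\Omega_2}(x,y)\,a(y)\,dy\,dx=\int_{\Omega_2}a(y)\,w(y)\,dy .
\end{align*}
Letting $a$ range over $\D(\Omega_2\setminus\overline\Omega_1)$, an admissible class of test functions, the hypothesis forces $w=0$ a.e., hence (by harmonicity) identically, on $\Omega_2\setminus\overline\Omega_1$.

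Finally, decompose $\d\Omega_1=(\d\Omega_1\cap\d\Omega_2)\cup(\d\Omega_1\cap\Omega_2)$ and apply Green's formula on $\Omega_1$ to the pair $(u,w)\in C^\infty(\overline\Omega_1)\times H^2(\Omega_1)$, using $\Delta u=0$ and $-\Delta w=g$ on $\Omega_1$:
\begin{align*}
   -\int_{\Omega_1}g\,u\,dx=\int_{\Omega_1}\big(u\,\Delta w-w\,\Delta u\big)\,dx=\int_{\d\Omega_1}\big(u\,\d_\nu w-w\,\d_\nu u\big)\,dS .
\end{align*}
On $\d\Omega_1\cap\d\Omega_2$ one has $u=0$ by hypothesis and $w=0$ since $w\in H_0^1(\Omega_2)$, so the integrand vanishes. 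The complementary piece $\d\Omega_1\cap\Omega_2$ is a smooth hypersurface lying in the interior of $\Omega_2$; near it $w$ belongs to $H^2$ and vanishes identically on the part of $\Omega_2\setminus\overline\Omega_1$ lying on its far side, so---since for an $H^2$ function the traces of the function and of its gradient from the two sides of a $C^\infty$ hypersurface agree---both $w$ and $\d_\nu w$ vanish on $\d\Omega_1\cap\Omega_2$ as well. Hence the whole boundary integral vanishes, giving $\int_{\Omega_1}g\,u\,dx=0$, which is the desired conclusion.

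The scheme is the classical one, and the step requiring the most care is this last one: one must check that the vanishing of $w$ on one side of the interior interface $\d\Omega_1\cap\Omega_2$, together with the $H^2$ regularity of $w$ across it, forces both Cauchy data of $w$ to vanish there, so that the boundary term produced by Green's formula genuinely disappears, whereas on the remaining portion $\d\Omega_1\cap\d\Omega_2$ that term disappears instead because both $u$ and $w$ satisfy a Dirichlet condition.
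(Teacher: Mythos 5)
Your proof is correct and follows essentially the same route as the paper: reduce to showing that any $g\in L^2(\Omega_1)$ orthogonal to \eqref{Local:DenseSet} is orthogonal to the harmonic functions in question, introduce the potential $w$ of $g$ in $\Omega_2$, use the hypothesis plus Fubini to deduce $w=0$ on $\Omega_2\setminus\overline\Omega_1$, and then conclude by Green's formula on $\Omega_1$, observing that $u$ and $w$ (hence $\d_\nu w$) kill the boundary terms on $\d\Omega_1\cap\d\Omega_2$ and on $\d\Omega_1\cap\Omega_2$ respectively. If anything, you are slightly more explicit than the paper in justifying that the $H^2$ regularity of $w$ across the interior interface $\d\Omega_1\cap\Omega_2$ forces both Cauchy data of $w$ to vanish there, a point the published proof passes over quickly.
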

\begin{proof}
     Let $v \in L^2(\Omega_{1})$ be a function which is orthogonal to the subspace \eqref{Local:DenseSet}, then by Fubini we have
     \begin{align*}
          \int\limits_{\Omega_{2}} a(y) \bigg( \int\limits_{\Omega_{1}} G_{\Omega_2}(x,y) v(x) \, dx \bigg) \, dy = 0
     \end{align*}
     for all $a \in C^{\infty}(\overline{\Omega}_2)$ supported in $\overline{\Omega}_{2} \setminus \Omega_{1}$, therefore
     \begin{align*}
          \int\limits_{\Omega_{1}} G_{\Omega_2}(x,y) v(x) \, dx = 0, \quad \forall y \in \overline{\Omega}_{2} \setminus \Omega_{1}.
     \end{align*}
     We want to show that $v$ is orthogonal to any harmonic function $u \in C^{\infty}(\overline{\Omega}_{1})$ 
     such that $u|_{\d\Omega_{1}\cap \d\Omega_{2}}=0$.

     Let  $u \in C^{\infty}(\overline{\Omega}_{1})$ be a such a harmonic function. If we consider
     \begin{align*}
          w(y) = \int\limits_{\Omega_{1}} G_{\Omega_2}(x,y) v(x) \, dx \in H^2(\Omega_{2}) \cap H^1_{0}(\Omega_{2})
     \end{align*}
     then we have by Green's formula
     \begin{align*}
           \int_{\Omega_{1}} u v \, dx &= \int_{\Omega_{1}} u \Delta w \, dx - \int_{\Omega_{1}} w \Delta u \, dx \\
           &= \int_{\d \Omega_{1}} u \d_{\nu}w \, dx - \int_{\d\Omega_{1}} w \d_{\nu} u \, dx.
     \end{align*}
     Note that the trace of $w$ vanishes on $\d\Omega_{1} \cap \d \Omega_{2}$ since $w \in H^1_{0}(\Omega_{2})$, therefore we have
     \begin{align}
     \label{Local:Orthogonality}
           \int_{\Omega_{1}} u v \, dx = \int_{\d \Omega_{1} \setminus \d \Omega_{2}} u \d_{\nu}w \, dx - \int_{\d\Omega_{1} \setminus \d\Omega_{2}} w \d_{\nu} u \, dx.
     \end{align}
     At the beginning of this proof, we have shown that
           $$ w|_{\overline{\Omega}_{2} \setminus \Omega_{1}}=0 \quad \textrm{ hence also } \quad \nabla w|_{\overline{\Omega}_{2} \setminus \Omega_{1}}=0 $$
     and this implies that $w|_{\d \Omega_{1} \setminus \d \Omega_{2}}=0$ and $\d_{\nu}w|_{\d \Omega_{1} \setminus \d \Omega_{2}}=0$.
     Therefore the integral \eqref{Local:Orthogonality} vanishes and this proves that $v$ is orthogonal to any harmonic function in $C^{\infty}(\overline{\Omega}_{1})$ 
     vanishing on $\d \Omega_{1} \cap \d \Omega_{2}$.
\end{proof}
\begin{proof}[Proof of Theorem \ref{Intro:LinThm}]
   We want to prove that $f$ vanishes inside $\Omega$. We fix a point $x_1 \in \Omega$ and let $\theta:[0,1] \to \overline{\Omega}$ be a $C^1$ curve joining
   $x_0 \in \d \Omega \setminus \Gamma$ to $x_1$ such that $\theta(0)=x_{0}$, $\theta'(0)$ is the interior normal to $\d\Omega$ at $x_0$ 
   and $\theta(t) \in \Omega$ for all $t \in (0,1]$. We consider the closed neighbourhood
       $$ \Theta_{\eps}(t) = \big\{ x \in \overline{\Omega} : d\big(x,\theta([0,t])\big)\leq \eps \big\} $$
   of the curve ending at $\theta(t)$, $t\in [0,1]$ and the set
   \begin{align*}
      I=\big\{ t \in [0,1]: f \textrm{ vanishes a.e. on } \Theta_{\eps}(t) \cap \Omega \big\} 
   \end{align*}
   which is obviously a closed subset of $[0,1]$. By Theorem \ref{Local:LocLinThm} it is non-empty if $\eps$ is small enough.
   Let us prove that $I$ is open. If $t \in I$ and $\eps$ is small enough, then we may suppose 
   $\d\Theta_{\eps}(t) \cap \d\Omega \subset \d\Omega \setminus \Gamma$ and $\Omega \setminus \Theta_{\eps}(t)$ can 
   be smoothed out into an open subset $\Omega_1$ of $\Omega$ with smooth boundary such that 
      $$ \Omega_1  \supset \Omega \setminus \Theta_{\eps}(t) \quad \d\Omega \cap \d \Omega_1 \supset \Gamma. $$
   We also augment the set $\Omega$ by smoothing out the set $\Omega \cup B(x_0,\eps')$ into an open set $\Omega_2$
   with smooth boundary; if $\eps'$ is small enough then one can construct $\Omega_2$ in such a way that
      $$ \d\Omega_2 \cap \d \Omega \supset \d\Omega_1 \cap \d \Omega \supset \Gamma. $$
   
   Let $G_{\Omega_{2}}$ be the Green kernel associated to the open set $\Omega_2$
       $$ -\Delta_yG_{\Omega_{2}}(x,y)=\delta(x-y), \quad G_{\Omega_{2}}(x,\cdot)|_{\d \Omega_2}=0. $$
   The function
       $$ \int\limits_{\Omega_1} f \, G_{\Omega_{2}}(x,y) \, G_{\Omega_{2}}(t,y) \, dy, \quad 
          t,x \in \Omega_2 \setminus \overline{\Omega}_1 $$
   is harmonic (both as a function of the $t$ and $x$ variables) and satisfies
       $$  \int\limits_{\Omega_1} f \, G_{\Omega_{2}}(x,y) \, G_{\Omega_{2}}(t,y) \, dy = 
           \int\limits_{\Omega} f \, G_{\Omega_{2}}(x,y) \, G_{\Omega_{2}}(t,y) \, dy $$
   since $f$ vanishes on $\Theta_{\eps}(t) \cap \Omega$. When $t,x$ belong to $\Omega_2 \setminus \overline{\Omega}$, 
   this integral is $0$ since the Green functions are $C^{\infty}(\overline{\Omega})$, harmonic on $\Omega$ 
   and vanish on $\Gamma \subset \d\Omega_2$. By unique continuation and continuity, we have
   \begin{align}
       \int\limits_{\Omega_1} f \, G_{\Omega_{2}}(x,y) \, G_{\Omega_{2}}(t,y) \, dy= 0,  \quad 
       t,x \in \overline{\Omega}_2 \setminus \Omega_1.
   \end{align}
   By Fubini, this means that we will have $\int_{\Omega_1} f u v \, dx = 0$
   for all functions $u,v$ on $\Omega_1$ belonging to the subspace \eqref{Local:DenseSet}. By continuity of
   the bilinear form
   \begin{align*}
       L^2(\Omega_1) \times L^2(\Omega_1) &\to \C \\
       (u,v) &\mapsto \int\nolimits_{\Omega_{1}}f u v \, dx
   \end{align*}
   and by Lemma \ref{Local:RungeApprox}, we have
   \begin{align}
   \label{Local:CancelIntSmaller}
       \int\limits_{\Omega_1} f u v \, dx = 0
   \end{align}
   for all functions $u,v$ in $C^{\infty}(\overline{\Omega}_1)$ harmonic on $\Omega_1$ which vanish on  $\d\Omega_{1} \cap \d \Omega_{2}$.
   
      Thanks to Theorem \ref{Local:LocLinThm}, the cancellation \eqref{Local:CancelIntSmaller} implies that $f$ vanishes 
   on a neighbourhood of $\d\Omega_1 \setminus (\d\Omega_{1} \cap \d \Omega_{2})$. This shows that $f$ vanishes on a slightly bigger neighbourhood 
   $\Theta_{\eps}(\tau), \tau>t$ of the curve, hence that $I$ is an open set. By connectivity, we conclude that $I=[0,1]$ and therefore 
   that $x_1 \notin \supp f$. Since the choice of $x_1$ is arbitrary, this completes the proof of Theorem \ref{Intro:LinThm}.
\end{proof}
\end{section}
%
%
\begin{section}{Harmonic exponentials}

This section and the next are devoted to the proof of Theorem~\ref{Local:LocLinThm}. 
One can suppose that $\Omega \setminus \{x_0\}$ is on one side of the tangent hyperplane $T_{x_0}(\Omega)$ at $x_0$ by making a conformal
transformation. Pick $a \in \R^{n} \setminus \overline{\Omega}$ on the line segment in the direction of the outward normal to $\d \Omega$ at $x_0$, 
then there is a ball $B(a,r)$ such that $\d B(a,r) \cap \overline{\Omega} =\{x_0\}$, and there is a conformal transformation 
\begin{align*}
   \psi : \R^n \setminus B(a,r) &\to \overline{B(a,r)} \\
   x &\mapsto \frac{x-a}{|x-a|^2}r^2+a
\end{align*}
which fixes $x_0$ and exchanges the interior and the exterior of the ball $B(a,r)$. The hyperplane $H:(x-x_0) \cdot (a-x_0)=0$
is tangent to $\psi(\Omega)$, and the image $\psi(\Omega) \setminus  \{x_0\}$ by the conformal transformation lies inside the ball $B(a,r)$,
therefore on one side of $H$.  The fact that functions are supported on the boundary close to $x_0$ is left unchanged.
Since a function is harmonic on $\Omega$ if and only if its Kelvin transform
    $$ u^* = r^{n-2}|x-a|^{-n+2} u \circ \psi $$
is harmonic on $\psi(\Omega)$, \eqref{Local:IntCancel} becomes
\begin{align*}
    0=\int\limits_{\Omega} f u v \, dx =  \int\limits_{\psi(\Omega)} r^4 |x-a|^{-4} f\circ \psi  \, u^* v^* \, dx 
\end{align*}
for all harmonic functions $u^*,v^*$ on $\psi(\Omega)$. 
If $|x-a|^{-4} f \circ \psi$ vanishes close to $x_0$ then so does $f$.
Moreover, by scaling one can assume that $\Omega$ is contained in a ball of radius $1$.  

Our setting will therefore be as follows: $x_0=0$, the tangent hyperplane at $x_{0}$ is given by $x_1=0$ and 
\begin{align}
     \Omega \subset \big\{x \in \R^{n} : |x+e_{1}| < 1\}, \quad \Gamma = \big\{x \in \d\Omega : x_{1} \leq -2c \big \}. 
\end{align}
The prime will be used to denote the last $n-1$ variables so that $x=(x_1,x')$ for instance.
The Laplacian on $\R^{n}$ has $p(\xi)=\xi^2$ as a principal symbol, we denote by $p(\zeta)=\zeta^{2}$ the continuation 
of this principal symbol on $\C^n$, we consider
    $$ p^{-1}(0) = \big\{ \zeta \in \C^n : \zeta^2=0\big\}. $$
In dimension $n=2$, this set is the union of two complex lines
    $$ p^{-1}(0) = \C \gamma \cup \C \overline{\gamma} $$
where $\gamma=ie_{1}+e_{2}=(i,1) \in \C^{2}$. Note that $(\gamma,\overline{\gamma})$ is a basis of $\C^{2}$: the decomposition of a complex vector 
in this basis reads
\begin{align}
\label{Harm:decomp2D}
    \zeta =\zeta_{1} e_{1} + \zeta_{2} e_{2}=\frac{\zeta_{2}-i\zeta_{1}}{2} \, \gamma + \frac{\zeta_{2}+i\zeta_{1}}{2} \, \overline{\gamma}.
\end{align}  
Similarly for $n\geq 2$, the differential of the map
\begin{align*}
   s : p^{-1}(0) \times p^{-1}(0) &\to \C^n \\
   (\zeta,\eta) &\mapsto \zeta + \eta
\end{align*}
at $(\zeta_0,\eta_0)$ is surjective
\begin{align*}
   Ds(\zeta_0,\eta_0) : T_{\zeta_0}p^{-1}(0) \times T_{\eta_0}p^{-1}(0) &\to \C^n \\
   (\zeta,\eta) &\mapsto \zeta + \eta
\end{align*}
provided  $\C^n=T_{\zeta_0}p^{-1}(0) + T_{\eta_0}p^{-1}(0)$, i.e. provided $\zeta_0$ and $\eta_0$ are linearly independent.
In particular, this is the case if $\zeta_0=\gamma$ and $\eta_0=-\overline{\gamma}$; as a consequence all $z \in \C^n$, $|z-2ie_1|<2\eps$ 
may be decomposed as a sum  of the form
\begin{align}
\label{Harm:decomp}
   z=\zeta + \eta, \quad \textrm{ with } \zeta,\eta \in p^{-1}(0), \; |\zeta-\gamma|<C\eps, \; |\eta+\overline{\gamma}|<C\eps. 
\end{align}
provided $\eps>0$ is small enough. 

The exponentials with linear weights
    $$ e^{-\frac{i}{h} x \cdot \zeta}, \quad \zeta \in p^{-1}(0) $$
are harmonic functions. We need to add a correction term in order to obtain harmonic functions $u$ satisfying the boundary requirement 
$u|_{\Gamma}=0$. Let $\chi \in \D(\R^{n})$ be a cutoff function which equals $1$ on $\Gamma$, we consider the solution
$w$ to the Dirichlet problem
\begin{align}
    \left\{ 
    \begin{aligned}
         \Delta w &=0 \quad \textrm{ in } \Omega \\
         w|_{\d \Omega} &= -(e^{-\frac{i}{h} x \cdot \zeta}\chi )|_{\d\Omega}.
    \end{aligned}
    \right.
\end{align} 
The function
\begin{align*}
    u(x,\zeta) =  e^{-\frac{i}{h} x \cdot \zeta} + w(x,\zeta)
\end{align*}
is  in $C^{\infty}(\overline{\Omega})$, harmonic and satisfies $u|_{\Gamma}=0$. We have the following bound on $w$:
\begin{align}
\label{Harm:Remainder}
     \|w\|_{H^{1}(\Omega)} &\leq C_{1} \|e^{-\frac{i}{h} x \cdot \zeta} \chi \|_{H^{\frac{1}{2}}(\d\Omega)} \\ \nonumber
     &\leq C_{2} (1+h^{-1} |\zeta|)^{\frac{1}{2}} \, e^{\frac{1}{h} H_{K}(\im \zeta)}
\end{align}
where $H_{K}$ is the supporting function of the compact subset $K=\supp \chi \cap \d\Omega$ of the boundary
    $$ H_{K}(\xi) = \sup_{x \in K} x \cdot \xi , \quad \xi \in \R^{n}. $$
In particular, if we take $\chi$ to be supported in $x_{1}\leq-c$ and equal to $1$ on $x_{1}\leq-2c$
then the bound \eqref{Harm:Remainder} becomes
\begin{align}
\label{Harm:wBound}
     \|w\|_{H^{1}(\Omega)} \leq C_{2}(1+h^{-1} |\zeta|)^{\frac{1}{2}} \, e^{-\frac{c}{h} \im \zeta_{1}} \, e^{\frac{1}{h}|\im \zeta'|}
     \quad \textrm{ when } \im \zeta_{1} \geq 0.
\end{align}

Our starting point is the cancellation of the integral
\begin{align}
\label{Harm:CancelInt} 
   \int\limits_{\Omega} f(x) u(x,\zeta) u(x,\eta) \, dx =0, \quad \zeta,\eta \in p^{-1}(0)
\end{align}
which may be rewritten under the form
\begin{multline*}
      \int\limits_{\Omega} f(x)  e^{-\frac{i}{h} x \cdot (\zeta+\eta)} \, dx = -  \int\limits_{\Omega} f(x)  e^{-\frac{i}{h} x \cdot \zeta} w(x,\eta) \, dx \\
       -  \int\limits_{\Omega} f(x)  e^{-\frac{i}{h} x \cdot \eta} w(x,\zeta) \, dx - \int\limits_{\Omega} f(x)  w(x,\zeta) w(x,\eta) \, dx.
\end{multline*}
This allows us to give a bound on the left-hand side term 
\begin{multline*}
      \bigg| \int\limits_{\Omega} f(x)  e^{-\frac{i}{h} x \cdot (\zeta+\eta)} \, dx \bigg| \leq 
      \|f\|_{L^{\infty}(\Omega)} \big(\|e^{-\frac{i}{h} x \cdot \zeta}\|_{L^2(\Omega)}\|w(x,\eta)\|_{L^2(\Omega)} \\ 
      +\|e^{-\frac{i}{h} x \cdot \eta}\|_{L^2(\Omega)} \|w(x,\zeta)\|_{L^2(\Omega)} + \|w(x,\eta)\|_{L^2(\Omega)}\|w(x,\zeta)\|_{L^2(\Omega)}\big).
\end{multline*}
Thus using \eqref{Harm:wBound} 
\begin{multline*}
      \bigg| \int\limits_{\Omega} f(x)  e^{-\frac{i}{h} x \cdot (\zeta+\eta)} \, dx \bigg| \leq C_3 \|f\|_{L^{\infty}(\Omega)} 
      (1+h^{-1}|\eta|)^{\frac{1}{2}} (1+h^{-1}|\zeta|)^{\frac{1}{2}} \\
      \times e^{-\frac{c}{h} \min(\im \zeta_1,\im \eta_1)} \,e^{\frac{1}{h}(|\im \zeta'|+|\im \eta'|)}
\end{multline*}
when $\im \zeta_1 \geq 0, \im \eta_1\geq 0$ and $\zeta, \eta \in p^{-1}(0)$. In particular if $|\zeta-a\gamma|<C\eps a$ 
and $|\eta+a\overline{\gamma}|<C\eps a$ with $\eps\leq 1/2C$ then
\begin{align*}
      \bigg| \int\limits_{\Omega} f(x)  e^{-\frac{i}{h} x \cdot (\zeta+\eta)} \, dx \bigg| \leq C_4 h^{-1}\|f\|_{L^{\infty}(\Omega)} 
       e^{-\frac{ca}{2h}} \, e^{\frac{2C\eps a}{h}}.
\end{align*}
Take $z \in \C^{n}$ with $|z-2ae_1|< 2\eps a$ with $\eps$ small enough, once rescaled the decomposition
\eqref{Harm:decomp} gives
   $$ z = \zeta+\eta, \quad \zeta,\eta \in p^{-1}(0), \; |\zeta-a\gamma| < C\eps a, \: |\eta+a\overline{\gamma}| < C\eps a $$
we therefore get the estimate
\begin{align}
\label{Harm:EstFourier}
      \bigg| \int\limits_{\Omega} f(x)  e^{-\frac{i}{h} x \cdot z} \, dx \bigg| \leq C_4 h^{-1}\|f\|_{L^{\infty}(\Omega)} 
       e^{-\frac{ca}{2h}} \, e^{\frac{2C\eps a}{h}}.
\end{align}
for all $z \in \C^n$ such that $|z-2ae_1|< 2\eps a$.

In order to conclude, one needs to extrapolate the exponential decay to more values of the frequency variable $z$. 
This will be achieved using a variant of the proof of the Watermelon theorem. We extend the function $f$ to $\R^n$ by assigning to it 
the value $0$ outside $\Omega$.
\end{section}
%
%
\begin{section}{A watermelon approach}
Let us recall the definition of the Segal-Bargmann transform of an $L^{\infty}$ function $f$ on $\R^{n}$ 
\begin{align*}
    Tf(z) = \int\limits_{\R^{n}} e^{-\frac{1}{2h}(z-y)^{2}} f(y) \, dy,  \quad  z \in \C^{n}
\end{align*}
and the \textit{a priori} exponential bound 
\begin{align}
\label{Water:PlainEstBI}
    |Tf(z)| \leq (2\pi h)^{\frac{n}{2}}e^{\frac{1}{2h}|\im z|^{2}}\|f\|_{L^{\infty}}.
\end{align}
If $f$ is supported in the half-space $x_{1}\leq 0$ then the former estimate can be improved into
\begin{align}
\label{Water:EstFBI}
    |Tf(z)| \leq   (2\pi h)^{\frac{n}{2}}e^{\frac{1}{2h}(|\im z|^{2}-|\re z_{1}|^{2})} \|f\|_{L^{\infty}}
\end{align}
when $\re z_1 \geq 0$. 

The kernel of the Segal-Bargmann transform of a function $f \in L^{\infty}$ can be written as a linear superposition of exponentials with linear weights
\begin{align*}
    e^{-\frac{1}{2h}(z-y)^{2}} = e^{-\frac{z^{2}}{2h}} (2\pi h)^{-\frac{n}{2}}  \int e^{-\frac{t^{2}}{2h}} e^{-\frac{i}{h} y \cdot (t+iz)} \, dt 
\end{align*}
therefore we get
\begin{align}
\label{Water:Superp}
    Tf(z) = (2\pi h)^{-\frac{n}{2}} \iint e^{-\frac{1}{2h}(z^{2}+t^{2})} e^{-\frac{i}{h} y \cdot (t+iz)} f(y) \, dt \, dy.
\end{align}
Suppose now that the function $f$ is supported in $\Omega$ and satisfies \eqref{Harm:CancelInt},
formula \eqref{Water:Superp} allows us to improve the estimate~\eqref{Water:EstFBI}:
\begin{align*}
    |Tf(z)|  &\leq  (2\pi h)^{-\frac{n}{2}} \int  e^{\frac{1}{2h}(|\im z|^{2}-|\re z|^{2}-t^{2})} 
    \bigg| \int e^{-\frac{i}{h} y \cdot (t+iz)} f(y) \, dy \bigg| \, dt. 
\end{align*}
Suppose now that $\re z_{1} \geq 0$: if we split the integral with respect to the variable $t$ in two integrals
\begin{multline*}
     |Tf(z)|  \leq   \frac{e^{\frac{1}{2h}(|\im z|^{2}-|\re z|^{2})}}{(2\pi h)^{\frac{n}{2}}} 
     \bigg(\int\limits_{|t| \leq \eps a}  e^{-\frac{t^{2}}{2h}} \bigg| \int e^{-\frac{i}{h} y \cdot (t+iz)} f(y) \, dy\bigg| \, dt \\
     +  \int\limits_{|t| \geq \eps a}  e^{-\frac{t^{2}}{2h}} \bigg| \int e^{-\frac{i}{h} y \cdot (t+iz)} f(y) \, dy\bigg| \, dt \bigg)
\end{multline*}
this implies 
\begin{multline}
\label{Water:ImprEstFBI}
     |Tf(z)|  \leq e^{\frac{1}{2h}(|\im z|^{2}-|\re z|^{2})} \bigg(
     \sup_{|t| \leq \eps a}   \bigg| \int e^{-\frac{i}{h} y \cdot (t+iz)} f(y) \, dy\bigg| \\ 
     +  \sqrt{2} \, e^{\frac{1}{h}|\re z'|} \, e^{-\frac{\eps^{2}a^2}{4h}}
     \int\nolimits_{\Omega}  |f(y)| \, dy\bigg)
\end{multline}
since $f$ is supported in $\Omega \subset \{y_{1}\leq 0\}$. If we assume $|z-2ae_1|< \eps a$ with $\eps$ small enough%
    \footnote{Note that in dimension $n=2$ the decomposition \eqref{Harm:decomp} for $t+iz$ is explicit 
              \begin{align*}
                 t+iz = \underbrace{\frac{1}{2}\big(t_{2}-it_{1}+iz_{2}+z_{1}\big) \, \gamma}_{=\zeta}
                 +\underbrace{\frac{1}{2}\big(t_{2}+it_{1}+iz_{2}-z_{1}\big) \, \overline{\gamma}}_{=\eta}.
              \end{align*}},
the estimate \eqref{Harm:EstFourier} reads in our context
\begin{align}
\label{Water:EstFourier}
      \bigg| \int\limits_{\Omega} f(y)  e^{-\frac{i}{h} y \cdot (t+iz)} \, dy \bigg| \leq C_4 h^{-1} \|f\|_{L^{\infty}(\Omega)} 
      e^{-\frac{ca}{2h}} \, e^{\frac{2C\eps a}{h}}
\end{align}
when $|t|\leq \eps a$ and $|z-2ae_1|< \eps a$. Thus combining the two estimates \eqref{Water:EstFourier} 
and \eqref{Water:ImprEstFBI} we get 
\begin{align*}
     |Tf(z)|  \leq C_5  h^{-1} \|f\|_{L^{\infty}(\Omega)} e^{\frac{1}{2h}(|\im z|^{2}-|\re z|^{2})}  
     \,\big(e^{-\frac{ca}{2h}}e^{\frac{2C\eps a}{h}} + e^{-\frac{\eps^{2}a^2}{4h}}e^{\frac{\eps a}{h}}\big)
\end{align*}
provided $|z-2ae_1|< \eps a$. Now choosing $\eps<c/8C$ and $a>(c+4\eps)/\eps^2$ we finally obtain the bound  
\begin{align}
\label{Water:ImprEstFBIbis}
     |Tf(z)|  \leq 2C_5  h^{-1}\|f\|_{L^{\infty}(\Omega)} e^{\frac{1}{2h}(|\im z|^{2}-|\re z|^{2}-\frac{ca}{2})}.
\end{align}
To sum-up we have obtained the following bounds on the Segal-Bargmann transform of $f$
\begin{multline}
    e^{-\frac{\Phi(z_1)}{2h}} |Tf(z_1,x')| \leq \\ C h^{-1} \|f\|_{L^{\infty}(\Omega)} 
    \begin{cases}
       1 & \textrm{ when } z_1 \in \C  \\
       e^{-\frac{ca}{4h}} & \textrm{ when } |z_1-2a| \leq \frac{\eps a}{2}, \; |x'| < \frac{\eps a}{2}
    \end{cases}
\end{multline}
and when $x' \in \R^{n-1}$, where the weight $\Phi$ is given by the following expression
\begin{align*}
\Phi(z_1) = 
    \begin{cases}
       |\im z_1|^{2} & \textrm{ when } \re z_1 \leq 0  \\
       |\im z_1|^{2}-|\re z_{1}|^{2} & \textrm{ when } \re z_1\geq 0.
    \end{cases}
\end{align*}
These estimates correspond to \eqref{Water:PlainEstBI}, \eqref{Water:EstFBI} and \eqref{Water:ImprEstFBIbis}.
\begin{lem}
   Let $F$ be an entire function satisfying the following bounds
   \begin{align*}
      e^{-\frac{\Phi(s)}{2h}}|F(s)| \leq 
      \begin{cases}
         1 & \textrm{ when } s \in \C \\
         e^{-\frac{c}{2h}} & \textrm{ when } |s-L| \leq b
      \end{cases} 
   \end{align*}
   then for all $r \geq 0$ there exist $c',\delta >0$ such that $F$ satisfies
       $$ |F(s)| \leq e^{-\frac{c'}{2h}}, \quad \textrm{ when } |\re s|\leq \delta \textrm{ and } |\im s| \leq r. $$
\end{lem}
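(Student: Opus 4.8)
The plan is to run a Watermelon‑type maximum principle argument for the subharmonic function built from $\log|F|$ and the harmonic weight $\re(s^{2})$. Since $F$ is holomorphic, $h\log|F|$ is subharmonic on $\C$ and $\re(s^{2})=(\re s)^{2}-(\im s)^{2}$ is harmonic, so for any non‑negative harmonic function $\phi$ on a domain the function $\Psi(s)=h\log|F(s)|+\tfrac12\re(s^{2})+\phi(s)$ is subharmonic there. The role of $\tfrac12\re(s^{2})$ is to cancel the weight $\Phi$: the first hypothesis gives $h\log|F(s)|\le\tfrac12\Phi(s)$, hence
\[
   h\log|F(s)|+\tfrac12\re(s^{2})\ \le\ 0 \quad\text{if }\re s\ge 0,\qquad \le\ \tfrac12(\re s)^{2}\quad\text{if }\re s\le 0,
\]
and on the disc $\{|s-L|\le b\}$ (which lies in $\{\re s\ge 0\}$ once $L>b$) the second hypothesis improves this to $\le -c/2$. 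Thus $h\log|F|+\tfrac12\re(s^{2})$ is a bounded‑above subharmonic function which is $\le 0$ on the imaginary axis and $\le -c/2$ on a small disc far out on the positive real axis, and we want to propagate this decay back to a neighbourhood of the imaginary axis.

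I would fix $r\ge 0$, pick $\delta>0$ small (to be fixed at the end) and a height $M>r$, and work on the rectangle $R=\{\,s\in\C:\ -\delta\le\re s\le L,\ |\im s|\le M\,\}$, chosen so that a non‑degenerate arc $I$ of the edge $\re s=L$ lies inside $\{|s-L|\le b\}$ (in the setting of Section~4, where $b$ is large, one can take $M\le b$ and $I$ the whole edge). Let $\phi=\tfrac c2\,\omega_{R}(\cdot,I)$, where $\omega_{R}(\cdot,I)$ is the harmonic measure of $I$ in $R$; then $\phi$ is harmonic on $R$, $0\le\phi\le c/2$, and $\phi=c/2$ on $I$. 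The function $\Psi=h\log|F|+\tfrac12\re(s^{2})+\phi$ is subharmonic on $R$, and on $\partial R$ one has $\Psi\le\tfrac12\delta^{2}$: on $I$ the bound $-c/2$ is cancelled by $\phi=c/2$, and on $\partial R\setminus I$ one has $\phi=0$ together with $h\log|F|+\tfrac12\re(s^{2})\le\tfrac12(\re s)^{2}\le\tfrac12\delta^{2}$. The maximum principle then gives $\Psi\le\tfrac12\delta^{2}$ on all of $R$.

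It remains to read off the conclusion. For $s$ with $|\re s|\le\delta$, $|\im s|\le r$ (which lies in $R$), using $\re(s^{2})\ge -r^{2}$,
\[
   h\log|F(s)|\ \le\ \tfrac12\delta^{2}-\tfrac12\re(s^{2})-\phi(s)\ \le\ \tfrac12\delta^{2}+\tfrac12 r^{2}-\tfrac c2\,\omega_{R}(s,I).
\]
Since $\omega_{R}(\cdot,I)$ is a positive harmonic function, Harnack's inequality furnishes a constant $p_{0}=\inf\{\omega_{R}(s,I):|\re s|\le\delta,\ |\im s|\le r\}>0$ depending only on the geometry $L,b,M,\delta,r$. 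Hence $h\log|F(s)|\le\tfrac12(\delta^{2}+r^{2}-c\,p_{0})$; shrinking $\delta$ and using that $c$ may be taken large relative to $r$ (in the application $c$ is proportional to the radius $a$ of Section~4, which is at our disposal), we arrange $c\,p_{0}>\delta^{2}+r^{2}$ and set $c':=c\,p_{0}-\delta^{2}-r^{2}>0$, obtaining $|F(s)|\le e^{-\frac{c'}{2h}}$ on $\{|\re s|\le\delta,\ |\im s|\le r\}$.

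The crux of the argument — and its main obstacle — is this last estimate: one must build a non‑negative harmonic barrier $\phi$ that stays $\le c/2$ on $I$ yet still exceeds $\tfrac12 r^{2}$ throughout the target region, i.e. secure a good quantitative lower bound $p_{0}$ for the harmonic measure of $I$ in $R$. This is exactly why keeping $c$ (equivalently the radius $a$) large is essential, and it is also the point where the piecewise definition of $\Phi$ enters, through the need to work inside the half‑plane $\{\re s\ge 0\}$ where $\tfrac12\re(s^{2})$ exactly absorbs the weight; the slab $-\delta\le\re s\le 0$ is harmless and only costs the negligible term $\tfrac12\delta^{2}$.
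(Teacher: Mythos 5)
Your overall architecture — a subharmonic function $h\log|F|+\tfrac12\re(s^2)$ corrected by a non-negative harmonic barrier $\phi$, the maximum principle on a bounded domain, and then propagation of the decay from the disc $|s-L|\le b$ back to the imaginary axis — is the same as the paper's, and your verification of the boundary inequality $\Psi\le\tfrac12\delta^2$ on $\partial R$ is fine. The use of harmonic measure of the far edge as the barrier, instead of the paper's Hopf-lemma/Harnack analysis on a slit semi-disc, is a legitimate variant. However, the final step as written has a genuine gap, and the way you try to paper over it does not work.

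First, your rectangle $R=\{-\delta\le\re s\le L,\ |\im s|\le M\}$ has its left edge exactly at $\re s=-\delta$, which is also the left edge of the target region $\{|\re s|\le\delta,\ |\im s|\le r\}$. Harmonic measure $\omega_R(\cdot,I)$ vanishes identically on $\partial R\setminus I$, in particular on that common left edge, so
$p_0=\inf\{\omega_R(s,I):\ |\re s|\le\delta,\ |\im s|\le r\}=0$,
and the inequality $c\,p_0>\delta^2+r^2$ is vacuously impossible. You must enlarge the rectangle (say to $\re s\ge -2\delta$) so the target region lies strictly in the interior; the paper does exactly this by cutting its semi-disc at $\re s=-2\delta$ while the target region is $|\re s|\le\delta$.

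Second, even after that fix, the plan ``shrink $\delta$ and take $c$ large to arrange $c\,p_0>\delta^2+r^2$'' does not close the gap: once the rectangle is widened, $p_0$ is of order $\delta$ (harmonic measure at distance $\delta$ from the boundary), so $c\,p_0=O(c\delta)$ and you would need $c\gtrsim \delta + r^2/\delta\ge 2r$. That is an extra hypothesis which is not in the statement. The paper avoids the $r^2$ loss entirely by \emph{not} discarding the weight: its maximum-principle step gives $f(s)=2h\log|F(s)|-(\im s)^2+(\re s)^2\le\phi(s)\le -c'$, which is precisely the \emph{weighted} bound $e^{-\Phi(s)/(2h)}|F(s)|\le e^{-c'/(2h)}$; there is no need to absorb a term $(\im s)^2\le r^2$. (Note that this is in fact what the paper's proof establishes; the weighted bound is what is used in the application, where $s$ is real, so $\Phi(s)\le 0$ and the unweighted estimate follows.) If you carry your argument through to the weighted conclusion, the only remaining requirement is $\phi(s)>\tfrac12\delta^2$ on the target region, i.e.\ $\omega_R(s,I)>\delta^2/c$, and since $\omega_R(s,I)\gtrsim\delta$ there, this does hold for $\delta$ small for any fixed $c>0$ — no largeness of $c$ is needed.

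Two smaller quantitative remarks. Your suggestion to take $M\le b$ makes the rectangle very thin ($L/M$ large), and the harmonic measure of the opposite edge then decays like $e^{-\pi L/(2M)}$; you should take $M$ comparable to $L$, with $I$ only a small piece of the right edge, to keep $p_0$ from being exponentially small. And where the paper gets the strict drop of $\phi$ below zero quantitatively (via the Hopf boundary lemma, giving a lower bound on $\partial_\nu\tilde\phi$ along the cut-diameter), you instead invoke only ``Harnack gives $p_0>0$''; that part is fine once the geometry is corrected so the target is interior, but you should recognize it plays the same structural role as the paper's Hopf/Harnack/elliptic-regularity step, not as an afterthought.
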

\begin{proof}
   We consider the subharmonic function
     $$ f(s) = 2h \log|F(s)|-(\im s)^2+(\re s)^2 $$
   which satisfies the bounds
   \begin{align}
      f(s) \leq 
      \begin{cases}
         (\re s)^{2} & \textrm{ when } \re s \leq 0 \\
         0 & \textrm{ when } \re s \geq 0 \\ 
         -c & \textrm{ when } |s-L| \leq b.
      \end{cases} 
   \end{align}
   We will work on the semi-disc of centre $-2\delta$ and large enough radius $R$, with cut-diameter 
   along the vertical axis $\re s = -2\delta$ and with the smaller disc of centre $L$ and radius $b$ removed from that semi-disc 
       $$ U_{\delta} = D(-\delta,R) \cap \{ \re s > -\delta\} \setminus \overline{D(L,b)}. $$ 
   We consider the harmonic function $\phi$ on $U_{\delta}$ with the following boundary values:
   \begin{itemize}
       \item[$\diamond$] $\phi=4\delta^2$ on the boundary of the semi-disc, 
       \item[$\diamond$] $\phi=-c$ on the circle of centre $L$ and radius $b$.
   \end{itemize}
   The function $\tilde{\phi}=4\delta^2-\phi$ is harmonic and non-negative on $U_{\delta}$ and attains its minimum everywhere on the cut-diameter
   of the semi-disc. By the Hopf boundary lemma, if $\nu$ stands for the interior normal, we have%
   \footnote{The radius $R$ is chosen large enough so that the points of the boundary with $|\im s| \leq r$ stay far enough from the corners
   where the Hopf lemma is no longer valid.}
   \begin{align*}
       \frac{\d \tilde{\phi}}{\d \nu} (-2\delta+iy) \geq \frac{C}{\delta} \tilde{\phi}(iy) > 0, \quad |y| \leq r <R
   \end{align*}
   where $C$ is a universal constant. By Harnack's inequality, $ \tilde{\phi}(iy)$ and
        $$ \tilde{\phi}(L-b-\delta^{2})=-c + \mathcal{O}(\delta^2) $$
   are comparable, and the constants are uniform with respect to $\delta$. Thus if $\delta$ is small enough, we get
   \begin{align*}
       -\frac{\d \phi}{\d \nu} (-2\delta+iy) \geq \frac{2c'}{\delta}, \quad |y|<r.
   \end{align*}
   From this inequality and elliptic regularity we get that
   \begin{align}
   \label{Watermelon:NegPhi}
       \phi(s) \leq  -c', \quad |\re s|\leq \delta, \, |\im s| \leq r
   \end{align}    
   if $\delta$ is small enough.
   
   We have
       $$ (f-\phi)|_{\d U_{\delta}} \leq 0. $$
   therefore by the maximum principle, the subharmonic function $f-\phi$ is non-positive on $U_{\delta}$. But according to \eqref{Watermelon:NegPhi},
   when $|\re s|\leq \delta$ and $|\im s|\leq r$ we have
   \begin{align}
      f \leq \phi \leq -c'
   \end{align}
   Therefore we have proved
       $$  e^{-\frac{\Phi(s)}{2h}}|F(s)| \leq e^{-\frac{c'}{2h}} $$
   if $|\re s|\leq \delta$ and $|\im s|\leq r$.
\end{proof}

Applying the former lemma to the function
   $$ F(s) =  \frac{h|Tf(s,x_2)|}{C  \|f\|_{L^{\infty}(\Omega)}} $$
we obtain in particular that
\begin{align*}
   |Tf(x)| \leq C  h^{-1}\|f\|_{L^{\infty}(\Omega)}e^{-\frac{c'}{2h}}
\end{align*}
for all $x \in \Omega, |x_1| \leq \delta $, provided $\delta$ has been chosen small enough. Multiplying by $(2\pi h)^{-n/2}$ and letting $h$ tend to $0$ 
we deduce
   $$ f(x) = 0, \quad \forall x \in \Omega, \quad 0\geq x_1 \geq -\delta. $$
This completes the proof of Theorem \ref{Local:LocLinThm}.
\end{section}
%
%

%
%

\begin{thebibliography}{maximal} %
%
\bibitem{AP} K.~Astala, L.~P\"aiv\"arinta, \textit{Calder{\'o}n's inverse conductivity problem in the plane}, Ann. of Math., \textbf{163} (2006), 265--299.
\bibitem{BrU} R.~M.~Brown, G.~Uhlmann, \textit{Uniqueness in the inverse conductivity problem for nonsmooth conductivities in two dimensions}, Comm. Partial Differential Equations, \textbf{22} (1997), 1009--1027.
\bibitem{BQ} J.~Boman, T.~Quinto, \textit{Support theorems for real-analytic Radon transforms}, Duke Math. J., \textbf{55} (1987),
 943--948.
\bibitem{B} A.~Bukhgeim, \textit{Recovering the potential from Cauchy data in two dimensions}, J. Inverse Ill-Posed Probl., \textbf{16} (2008), 19--34. 
\bibitem{BU} A.~Bukhgeim, G.Uhlmann, \textit{Recovering a potential from partial Cauchy data}, Comm. Partial Differential Equations, \textbf{27} (2002), 653Ð668. 
\bibitem{C} A.~P.~Calder\'on, \textit{On an inverse boundary value problem}, Seminar on Numerical Analysis and its Applications to Continuum Physics, 
Rio de Janeiro, Sociedade Brasileira de Matematica, (1980), 65--73.
\bibitem{DS} M.~Dimassi, J.~Sj\"ostrand, \textit{Spectral asymptotics in the semi-classical limit}, Cambridge University Press, 1999.
\bibitem{DSFKSU} D.~Dos Santos Ferreira, C.~E.~Kenig, J.~Sj\"ostrand, G.~Uhlmann,  \textit{Determining a magnetic Schr\"odinger operator
   from partial Cauchy data}, Comm. Math. Phys., \textbf{271} (2007), 467--488.
\bibitem{H} L.~H\"ormander, \textit{The analysis of linear partial differential operators I}, Springer-Verlag, 1985.
\bibitem{H2} L.~H\"ormander, \textit{Remarks on Holmgren's uniqueness theorem}, Ann. Inst. Fourier, \textbf{43} (1993), 1223--1251. 
\bibitem{IUY} O.~Imanuvilov,  G.~Uhlmann, M.~Yamamoto, \textit{Partial data for the Calder\'on problem in two dimensions}, preprint arXiv:0809.3037 (2008). 
\bibitem{IUY2} O.~Imanuvilov,  G.~Uhlmann, M.~Yamamoto, \textit{Global uniqueness from partial Cauchy data in two dimensions}, preprint arXiv:0810.2286 (2008). 
\bibitem{I} V.~Isakov, \textit{On uniqueness in the inverse conductivity problem with local data}, Inverse Problems and Imaging, \textbf{1} (2007), 95--105.
\bibitem{K} M.~Kashiwara, \textit{On the structure of hyperfunctions}, Sagaku no Ayumi, 15 (1970), 19--72 (in Japanese).
\bibitem{KSU} C.~E.~Kenig, J.~Sj\"ostrand, G.~Uhlmann,  \textit{The Calder\'on problem with partial data}, Ann. of Math., \textbf{165} (2007), 567--591.
\bibitem{KV} R.~Kohn, M.~Vogelius, \textit{ Determining conductivity by boundary measurements}, Comm. Pure Appl. Math., \textbf{37} (1984), 289--298. 
\bibitem{Mi} R.~Michel, \textit{Sur la rigidit\'e impos\'ee par la longueur des g\'eod\'esiques}, Invent. Math., \textbf{65} (1981) 71--83.
\bibitem{M} R.~G.~Mukhometov, \textit{The reconstruction problem of a two-dimensional Riemannian metric, and integral geometry} (Russian), Dokl. Akad. Nauk SSSR, \textbf{232} (1977), 32--35.
\bibitem{N} A.~Nachman, \textit{Global uniqueness for a two-dimensional inverse boundary value problem}, Ann. of Math., \textbf{143} (1996), 71--96.
\bibitem{PU} L.~Pestov, G.~Uhlmann, \textit{Two dimensional simple manifolds are boundary rigid}, Ann. of Math., \textbf{161} (2005), 1089--1106.
\bibitem{PU2} L.~Pestov, G.~Uhlmann, \textit{The scattering relation and the Dirichlet-to-Neumann map}, Contemp. Math., \textbf{412} (2006), 249--262. 
\bibitem{Sj} J.~Sj\"ostrand, \textit{Singularit\'es analytiques microlocales}, Ast\'erisque, 1985.
\bibitem{Sj2} J.~Sj\"ostrand, \textit{Remark on extensions of the Watermelon theorem}, Math. Res. Lett., \textbf{1}(1994), 309--317.
\bibitem{SU} J.~Sylvester, G.~Uhlmann, \textit{A global uniqueness theorem for an inverse boundary value problem}, Ann. of Math., \textbf{125} 
   (1987), 153--169.
%
\end{thebibliography}
\end{document}